\author{Harry Crane}
\title{Infinitely exchangeable random graphs generated from a Poisson point process on monotone sets and applications to cluster analysis for networks}
\date{}
\def\sup{\mathop{\rm sup}\nolimits}
\def\dom{\mathop{\rm dom}\nolimits}
\def\cod{\mathop{\rm cod}\nolimits}
\def\pr{\mathop{\rm pr}\nolimits}
\def\rest{\mathop{\bf rest}\nolimits}
\def\dr{\mathop{\bf dr}\nolimits}
\def\catC{\mathop{\bf C}\nolimits}
\def\catD{\mathop{\bf D}\nolimits}
\def\cate{\mathop{\mathcal{E}}\nolimits}
\def\catf{\mathop{\mathcal{F}}\nolimits}
\def\catg{\mathop{\mathcal{G}}\nolimits}
\newtheorem{thm}{Theorem}[section]
\newtheorem{lemma}[thm]{Lemma}
\newtheorem{cor}[thm]{Corollary}
\begin{document}
\maketitle
\begin{abstract}
We construct an infinitely exchangeable process on the set $\cate$ of subsets of the power set of the natural numbers $\mathbb{N}$ via a Poisson point process with mean measure $\Lambda$ on the power set of $\mathbb{N}$.  Each $E\in\cate$ has a least monotone cover in $\catf$, the collection of monotone subsets of $\cate$, and every monotone subset maps to an undirected graph $G\in\catg$, the space of undirected graphs with vertex set $\mathbb{N}$.  We show a natural mapping $\cate\rightarrow\catf\rightarrow\catg$ which induces an infinitely exchangeable measure on the projective system $\catg^{\rest}$ of graphs $\catg$ under permutation and restriction mappings given an infinitely exchangeable family of measures on the projective system $\cate^{\rest}$ of subsets with permutation and restriction maps.  We show potential connections of this process to applications in cluster analysis, machine learning, classification and Bayesian inference.
\end{abstract}
\section{Introduction}\label{section:introduction}
Here, we show a construction of an infinitely exchangeable family of random graphs which is based on an associated Poisson point process on the power set of the natural numbers $\mathbb{N}$.  We provide a necessary and sufficient condition for the induced random graph to be infinitely exchangeable and discuss a potential use for this model in the area of cluster analysis and stochastic classification, which has been previously studied in a statistical and machine learning context in previous work by Jordan \cite{BleiJordanNg2004} (with Blei and Ng) and \cite{BroderickJordanPitman2011} (with Broderick and Pitman) McCullagh \cite{McCullaghYang2006,McCullaghYang2008}, but outside of the realm of network analysis.  

We now introduce preliminary material and notation which is critical to our treatment.
\subsection{Projective systems}\label{section:projective systems}
A {\em projective system} associates with each finite set $[n]$ a set $Q_n$ and with each one-to-one injective map $\varphi:[m]\rightarrow[n]$, $m\leq n$, a projection $\varphi^*:Q_n\rightarrow Q_m$ which maps $Q_n$ into $Q_m$ such that 
\begin{itemize}
	\item if $\varphi$ is the identity $[n]\rightarrow[n]$ then $\varphi^*$ is the identity $Q_n\rightarrow Q_n$ and
	\item if $\psi:[l]\rightarrow[m]$, $l\leq m$, and $\psi^*:Q_m\rightarrow Q_l$ is its associated projection, then the composition $(\varphi\psi):[l]\rightarrow[n]$ satisfies $(\varphi\psi)^*\equiv\psi^*\varphi^*:Q_n\rightarrow Q_l$.
\end{itemize}

If $Q_n$ is the set of subsets of $[n]^2$, i.e.\ the space of directed graphs with $n$ vertices, one can define the projection $Q_n\rightarrow Q_m$ either by {\em restriction} or {\em delete-and-repair}.  Each $A\in Q_n$ can be represented as an $n\times n$ matrix with entries in $\{0,1\}$ such that $A_{ij}=1$ if $(i,j)\in A$ and $A_{ij}=0$ otherwise.  For each $n\geq1$, let $\varphi_{n,n+1}$ be the operation on $Q_{n+1}$ which restricts $A$ to the complement of $\{n+1\}$.  In matrix form, $\varphi_{n,n+1}A=:A_{|[n]}$ is the $n\times n$ matrix obtained from $A$ by removing the last row and last column of $A$ and keeping the rest of the entries unchanged.  It is clear that the compositions $\varphi_{m,n}:=\varphi_{m,m+1}\circ\cdots\circ \varphi_{n-1,n}$ for $m\leq n$ are well-defined as the restriction of $A\in Q_n$ to $[m]$ by removing the last $n-m$ rows and columns of $A$.

For $n\geq1$, we write $\mathcal{S}_n$ to denote the symmetric group of permutations of $[n]$, i.e.\ one-to-one maps $[n]\rightarrow[n]$.  Each $\sigma\in\mathcal{S}_n$ acts on each element $A\in Q_n$ componentwise in the usual way.  That is, $(i,j)\in A$ if and only if $(\sigma(i),\sigma(j))\in\sigma(A)$.  The restriction maps $(\varphi_{m,n},m\leq n)$ together with permutation maps $(\sigma\in\mathcal{S}_n,n\geq1)$ and their compositions make $Q:=(Q_n,n\geq1)$ a projective system.

Another way to specify a projective system on $(Q_n,n\geq1)$ is by {\em delete-and-repair}.  For $n\geq m\geq1$, let $\psi_m$ act on $A\in Q_n$ by removing the $m$th row and column of $A$ and directing an edge from each $i$ in $\{j\in[n]:(j,m)\in A\}$ to each $k$ in $\{j\in[n]:(m,j)\in A\}$.  In other words, $\psi_mA$ is obtained by deleting the vertex labeled $m$ from $A$ and connecting two vertices $i$ and $k$ by a directed edge from $i$ to $k$ if both $(i,m)$ and $(m,k)$ are elements of $A$, i.e.\ there is a directed path $i\rightarrow m\rightarrow k$ in $A$.

For $m\leq n$, define $\psi_{m,n}:=\psi_{m+1}\circ\cdots\circ\psi_{n}$.  Plainly, $\psi_{m,n}$ is well-defined since for each $n\geq2$, $\psi_{n-2,n}\equiv\psi_{n-1}\circ\psi_n=\psi_n\circ\psi_{n-1}$ and $\psi_{l,n}=\psi_{l,m}\circ\psi_{m,n}$.  The delete-and-repair maps $(\psi_{m,n},m\leq n)$ together with permutation maps $(\sigma\in\mathcal{S}_n,n\geq1)$ and compositions also make $(Q_n,n\geq1)$ a projective system, which differs from the above projective system based on restriction maps.

Throughout the rest of this paper, for a system $Q:=(Q_n,n\geq1)$ we write $\varphi_{m,n}^{Q}$ to represent the restriction maps for collection $Q$ and $\psi_{m,n}^{Q}$ to represent the corresponding delete-and-repair maps.  We write $Q^{\rest}$, resp.\ $Q^{\dr}$, to denote the projective system on $Q$ described by the restriction, resp.\ delete-and-repair, maps together with permutation maps.
\subsection{Power set of $2^{[n]}$}\label{section:power set}
Let $\cate_n=2^{2^{[n]}}$ denote the power set of all subsets of $2^{[n]}$, the power set of $[n]$.  For each $n\geq1$, we define the partial order $\leq_{\cate}$ on $\cate_n$ by
$$E\leq_{\cate} E'\Longleftrightarrow [e\in E\Rightarrow e\in E'].$$
Therefore, $\{\{1\}\}\leq_{\cate}\{\{1\},\{1,2\}\}$ but $\{\{1\}\}$ and $\{\{1,2\}\}$ have no relation under $\leq_{\cate}$.

We define restriction maps $\varphi_{m,n}^{\cate}:\cate_n\rightarrow\cate_m$ as follows.  For $E:=\{e_i:1\leq i\leq k\}\in\cate_n$, we define $\varphi_{m,n}^{\cate}(E):=E_{|[m]}:=\{e_i\cap[m]:1\leq i\leq k\}$.  The maps $(\varphi^{\cate}_{m,n},m\leq n)$ preserve $\leq_{\cate}$ on $\cate$ since 
$$E\leq_{\cate} E'\Longleftrightarrow [e\in E\Longrightarrow e\in E']$$
and so
$$ e\in E\Longrightarrow e\cap[m]\in \varphi_{m,n}^{\cate}E, e\cap[m]\in \varphi_{m,n}^{\cate}E'\Longrightarrow \varphi^{\cate}_{m,n}E\leq_{\cate}\varphi^{\cate}_{m,n}E'.$$
Permutations act on elements of $E$ componentwise, i.e.\ for $n\geq1, E\in\cate_n$, and $\sigma\in\mathcal{S}_n$, we have $\sigma(E):=\{\sigma(e):e\in E\}$.  We write $\cate^{\rest}$ to denote the projective system $\cate:=(\cate_n,n\geq1)$ together with restriction maps $(\varphi^{\cate}_{m,n},m\leq n)$ and permutation maps $(\sigma\in\mathcal{S}_n,n\geq1)$ and their compositions.

\subsection{Monotone sets}\label{section:monotone sets}
A subset $A\subset 2^{[n]}$ is \it monotone \rm if $a\in A$ implies $2^{a}\subset A$.  For example, the set $A=\{\emptyset,\{1\},\{2\},\{3\},\{1,2\}\}=\langle\{1,2\},\{3\}\rangle$ is a monotone set with maximal elements $\{1,2\}$ and $\{3\}$, which constitute the generating class of $A$, written $G(A).$  An element $a$ of the generating class of a monotone set $A$ is a maximal element of $A$ in the sense that no other element $a'\in A$ contains $a$ as a subset.  The generating class $G(A)$ of a monotone subset $A$ consists of all maximal elements of $A$.  We write $\mathcal{F}_n$ as the set of monotone sets taking elements in $2^{[n]}$.  Note that a monotone set is uniquely determined by its generating class, and so we will write $A\in\mathcal{F}_n$ and $G(A)$ to describe the same object, i.e.\ the monotone set $A$, whenever it is convenient to do so.  Every subset $E$ of $2^{[n]}$ has a least monotone cover in $\mathcal{F}_n$, which we denote by $\alpha(E)$ and is given by $\alpha(E):=\{2^a:a\in E\}$.

For each $n\geq1$, $\mathcal{F}_n$ is a partially ordered set induced by the partial order {\em inclusion} on $2^{[n]}$, i.e.\ for $A,B\in\mathcal{F}_n$, we say $A\leq_{\catf} B$ if and only if each $a\in G(A)$ is a subset of some $b\in G(B)$.  And for any pair $A,B$ with $A\leq B$, the intervals $[A,B], (A,B]$ and $[A,B)$ are well-defined subsets of $\mathcal{F}_n$.  Note that we intend the symbols $\subseteq$ and $\subset$ to have strictly different meanings in this paper.  In particular, we write $A\subseteq B$ to mean $A$ is any subset of $B$, while we write $A\subset B$ to mean $A$ is a {\em proper subset} of $B$, i.e.\ $A\subseteq B$ but $A\neq B$.  This distinction becomes important in the next section.

We define the operation {\em restriction} $\varphi^{\catf}_{n,n+1}:\mathcal{F}_{n+1}\rightarrow\mathcal{F}_n$ as the operation which maps $G(A)\mapsto G(A)\cap[n]$.  That is, for $A\in\mathcal{F}_{n+1}$ with $G(A):=\{A_1,\ldots, A_k\}$, $\varphi^{\catf}_{n,n+1}A:=\langle A_i\cap[n]:i=1,\ldots,k\rangle$.  For $m\leq n$, we define $\varphi^{\catf}_{m,n}:=\varphi^{\catf}_{m,m+1}\circ\cdots\circ\varphi^{\catf}_{n-1,n}$ in the usual way by composition, and the collection of restriction maps $\catf:=(\varphi^{\catf}_{m,n},m\leq n)$ together with permutation maps makes $(\mathcal{F}_n,n\geq1)$ into a projective system. written $\catf^{\rest}$.  Here, a permutation $\sigma\in\mathcal{S}_n$ acts on a monotone set $A\in\mathcal{F}_n$ by acting componentwise on its generating class, i.e.\ $a\in G(A)$ if and only if $\sigma(a)\in G(\sigma(A))$.

Also note that the inverse mapping $\varphi_{n,n+1}^{\catf^{-1}}$ associates with each $F\in\catf_n$ an interval, and also maps intervals in $\catf_n$ to intervals in $\catf_{n+1}$.  In particular, for $n\geq1$ and $A:=\langle A_1,\ldots, A_k\rangle\in\mathcal{F}_n$, we have
$$\varphi_{n,n+1}^{\catf^{-1}}(A)=\left[\langle G(A)\rangle, \left\langle A_j\cup\{n+1\}:j=1,\ldots,k\right\rangle\right].$$
\subsection{Undirected graphs}\label{section:undirected graphs}
For $n\geq1$, an {\em undirected graph} $G\in\mathcal{G}_n$ is a pair $(V,E)$ of vertices $V$ and edges $E$ whereby the number of vertices $\#V$ of $G$ is $n$, and without loss of generality we assume $V=[n]:=\{1,\ldots,n\}$, and the edges are a subset of $[n]^2$ such that $(i,j)\in E$ implies $(j,i)\in E$.

For each $n\geq1$, the elements of $\mathcal{G}_n$ correspond to the symmetric subsets of $[n]^2$, e.g.\ for $A\in\mathcal{G}_n$, $(i,j)\in A$ if and only if $(j,i)\in A$, and hence $\catg:=(\mathcal{G}_n,n\geq1)$ is a projective system under both restriction and delete-and-repair, as described in section \ref{section:projective systems}.  For each $n\geq1$, $\catg_n$ is a poset under $\leq_{\catg}$ where for $G,G'\in\catg_n$
$$G\leq_{\catg}G'\Longleftrightarrow [\{i,j\}\in G\Longrightarrow \{i,j\}\in G'].$$
For clarity of notation, write $\varphi_{m,n}^{\catg}$ and $\psi_{m,n}^{\catg}$ to denote the operations of restriction and delete-and-repair respectively on the projective system $\catg^{\rest}$ and $\catg^{\dr}$ respectively.
\subsection{Some category theory}\label{ref:category theory}
The relationship between the collections $\cate,\catf$, and $\catg$ is described in a straightforward way by elementary concepts in category theory \cite{AwodeyBook}.

A {\em category} {\bf C} consists of {\em objects} $A,B,C,\ldots$ and {\em arrows} $f,g,h,\ldots$ between objects so that for each arrow $f\in$ {\bf C} there are objects $\dom(f)$ and $\cod(f)$ in {\bf C}, the domain and codomain respectively of $f$, and we write $f:\dom(f)\rightarrow\cod(f)$.  Given $f,g\in$ {\bf C} such that $\cod(f)=\dom(g)$, the {\em composite} $g\circ f:\dom(f)\rightarrow\cod(g)$ is an arrow in {\bf C}.  Also, for each object $A\in$ {\bf C} there is an identity arrow $1_A:A\rightarrow A$, and all arrows of {\bf C} must satisfy associativity and preservation under composition with the identity functions.

In each of the categories we define below, there is at most one arrow between any two objects.  Therefore, if an arrow $f$ corresponds to the composition of arrows $g\circ h$, these must represent the same arrow.  Under this assumption, for a pair of objects we need not make explicit all of the various compositions of arrows which result in an arrow between these objects, as they are implicitly assumed to be there, and are all the same arrow.

We define three categories as follows.  We write $\mathcal{E}^{\rest}$ for the category with objects given by the elements of $\cate$ and arrows given by the restriction $(\varphi^{\cate}_{m,n},m\leq  n)$ and permutation $(\sigma\in\mathcal{S}_n,n\geq1)$ maps, and compositions of these maps.  That is, for $m\leq n$ and $E\in\cate_n$, there is an arrow $E\rightarrow\varphi_{m,n}^{\cate}E=E_{|[m]}$.  We denote the arrow between $E$ and $E_{|[m]}$ by $\varphi_{m,n}^{\cate}E$.  Likewise, we define $\catf^{\rest}$ and $\catg^{\rest}$ to be the category with objects given by the elements of $\catf$ and $\catg$ respectively, and arrows defined by the restriction maps, $(\varphi_{m,n}^{\catf},m\leq n)$ and $(\varphi_{m,n}^{\catg}, m\leq n)$ resp., and permutation maps.  For example, in $\catf^{\rest}$, we have $\langle\{1,2\},\{3\}\rangle\rightarrow_{\varphi_{2,3}^{\catf}}\langle\{1,2\}\rangle.$

A {\em functor} between categories {\bf C} and {\bf D} is a map $F:\mathbf{C}\rightarrow\mathbf{D}$ which takes objects in {\bf C} to objects in {\bf D} and arrows in {\bf C} to arrows in {\bf D}.  There are natural functors $\alpha:\cate^{\rest}\rightarrow\catf^{\rest}$ and $\beta:\catf^{\rest}\rightarrow\catg^{\rest}$, which we define as follows.
\begin{itemize}
\item[(i)] For each $n\geq1$ and $E\in\mathcal{E}_n$, $\alpha(E)\in\mathcal{F}_n$ is the least monotone cover of $E$.
\item[(ii)] For each $n\geq1$ and $F:=\langle F_1,\ldots,F_k\rangle\in\mathcal{F}_n$, $\beta(F)=G_F\in\mathcal{G}_n$ where $(i,j)\in G_F$ if and only if $\{i,j\}\subseteq F_j$ for some $j=1,\ldots,k$.
\item[(iii)] For $m\leq n$, $\alpha(\varphi^{\cate}_{m,n})=\varphi^{\catf}_{m,n}$ and $\beta(\varphi^{\catf}_{m,n})=\varphi^{\catg}_{m,n}$ so that $(\alpha\circ\beta)(\varphi_{m,n}^{\cate})=\varphi_{m,n}^{\catg}$.
	\item[(iv)] For every $n\geq1$, $\alpha(\sigma)=\sigma$ and $\beta(\sigma)=\sigma$ for all $\sigma\in\mathcal{S}_n$.
\end{itemize}
Functors between posets preserve partial ordering.  For $\bullet=\cate,\catf,\catg$, we have defined the partial order $\leq_{\bullet}$ in previous sections.  For $m\leq n$ and $A, A'\in\bullet_n$ such that $A\leq_{\bullet} A'$, it holds that $\varphi_{m,n}^{\bullet}A\leq_{\bullet}\varphi_{m,n}^{\bullet}A'$.  Hence, for each fixed $1\leq m\leq n$, the restriction maps $(\varphi_{m,n}^{\bullet})$ define a functor $\bullet_n\rightarrow\bullet_m$, where we take the elements of $\bullet_n$ and $\bullet_m$ as objects and the partial order $\leq_{\bullet}$ defines the arrows, i.e.\ there is an arrow $A\rightarrow A'$ in $\bullet_n$ if and only if $A\leq_{\bullet}A'$.

We can also regard the collections $\cate^{\rest},\catf^{\rest},\catg^{\rest}$ as partially ordered sets with partial order $\leq_{\bullet^{\rest}}$, $\bullet=\cate,\catf,\catg$, defined as follows.  For $m\leq n$, $A\in\bullet_n$, $A'\in\bullet_m$,
$$A'\leq_{\bullet^{\rest}}A\Longleftrightarrow A'=\varphi_{m,n}^{\bullet}A.$$
The functors $\alpha,\beta$ and $\beta\circ\alpha$ above preserve the partial orders $\leq_{\cate^{\rest}},\leq_{\catf^{\rest}}$ and $\leq_{\catg^{\rest}}$.

Below, we show a construction of infinitely exchangeable random elements of $\mathcal{E}^{\rest}$.  In particular, we construct an infinitely exchangeable random monotone set by projecting from a Poisson point process $X$ on the power set to a random subset $X^*\in\cate^{\rest}$ of the power set to the least monotone cover $\alpha(X^*)\in\catf^{\rest}$, which corresponds to a random graph $\beta\circ\alpha(X^*)\in\mathcal{G}^{\rest}$.  This procedure looks like this 
$$X\rightarrow_{*}X^*\rightarrow_{\alpha}\alpha(X^*)\rightarrow_{\beta}\beta\circ\alpha(X^*).$$

\paragraph{Projective systems in statistics}
The relevance of category theory and projective systems in statistical modeling is discussed in detail by McCullagh \cite{McCullagh2002}.  Here we have introduced projective systems for the collection of subsets of $2^{[n]}$, their associated monotone subsets of $2^{[n]}$ and their associated undirected graphs in $[n]^2$.  

The choice of projection, i.e.\ either restriction or delete-and-repair, on $\catg$ is intended to reflect the notion of subsampling in statistics, and each admits its own statistical interpretation in terms of subsampling which may be appropriate depending on the application, and the actual way in which observations are made.  In the study of directed graphs, in particular permutations, delete-and-repair is often used, see e.g.\ the Chinese restaurant process (CRP) on permutations \cite{PitmanCSP}.  However, for our purposes, restriction maps are a natural choice.

To be specific, if we think of a graph as modeling a social network, the edges of the graph represent social links among the agents (nodes) in the population.  When observing a social network, there are several sampling methods which are reasonable.  The most intuitive of these are perhaps node sampling and snowball sampling.  In node sampling, we first sample a set $\{u_1,\ldots,u_n\}$ of nodes and subsequently observe any edges between these nodes.  In this setting, restriction accurately reflects our sampling method since removal of a node from our sample, e.g.\ $u_n$, removes any edges which involves that node from our sampled graph.  In snowball sampling, we start with a set of nodes, without loss of generality suppose we start with one node $u$, and proceed as follows.  Given $U_0=u$, sample $U_1:=\{v\in\mathcal{U}:v\sim U_0\}$, the set of nodes $v$ adjacent to some node in $U_0$ in the graph we are observing.  Subsequently, put $U_{k+1}:=\{v\in\mathcal{U}:v\sim U_k,\mbox{ }v\notin U_k,U_{k-1},\ldots,U_0\}$.  In snowball sampling, we stop at some arbitrary level $k$.  This way of sampling results in a sample of all nodes at radius $k$ or less from $u$, and depends on our choice of $u$ and the network structure.  In this case, removal of a node $v\in V:=\bigcup_{i=0}^{k}U_i$ results in removal of all other nodes $w\in V$ such that $v$ lies along every path between $u$ and $w$.  The result of this type of subsampling is not described simply via restriction.  Below, we discuss only those projective systems characterized by the restriction maps, and the implications of this method of sampling on inference.
\subsection{Infinite exchangeability}\label{section:infinite exchangeability}
A family $(p_n,n\geq1)$ of probability measures on a projective system $Q:=(Q_n,n\geq1)$ is {\em infinitely exchangeable} if it is invariant under both the action of permutations, called {\em finite exchangeability}, and selection according to the projection maps $(D_{m,n},m\leq n)$ associated with the system, called {\em consistency}.  For example, a family of measures $(p_n,n\geq1)$ on $Q$ is infinitely exchangeable if
\begin{itemize}
	\item for each $n\geq1$ and $\sigma\in\mathcal{S}_n$, $p_n(A)=p_n(\sigma(A))$ for every $A\in{Q}_n$ and
	\item for every $m\leq n$, $p_m(A)=p_n(D_{m,n}^{-1}(A))$ for every $A\in{Q}_m$.
\end{itemize}
An infinitely exchangeable collection of measures uniquely characterizes a measure $p$ on the infinite space associated with $Q$ through its finite-dimensional distributions and invariance under projection and permutation maps.

Above we have defined functors $\alpha:\cate^{\rest}\rightarrow\catf^{\rest}$ and $\beta:\catf^{\rest}\rightarrow\catg^{\rest}$.  One property of functors is that $\alpha(E_1\circ E_2)=\alpha(E_1)\circ\alpha(E_2)$.  Hence, we have the following elementary lemma which is useful for our purposes below.

\begin{lemma}\label{lemma:functor inf exch} Let $\gamma:\catC\rightarrow\catD$ be a functor between categories $\catC$ and $\catD$ where the objects $C$ of $\catC$ and $D$ of $\catD$ form a projective system under mappings $\pi^{\catC}$ and $\pi^{\catD}$ respectively, and the arrows are defined by the partial ordering induced by the projections $\pi^{\catC}$ and $\pi^{\catD}$.  Let $\mu_{\catC}$ be a probability measure on $C$ and let $\mu_{\catD}=\mu_{\catC}\gamma^{-1}$ be the distribution induced on the objects of $\catD$ by $\mu_{\catC}$ through the functor $\gamma$.  If $\mu_{\catC}$ is invariant under $\pi^{\catC}$, i.e.\ $\mu_{\catC}=\mu_{\catC}\pi^{\catC^{-1}}$, then $\mu_{\catD}$ is invariant under $\pi^{\catD}$.
\end{lemma}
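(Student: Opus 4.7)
The plan is to reduce the statement to the identity $\gamma \circ \pi^{\catC} = \pi^{\catD} \circ \gamma$, which is where the functoriality hypothesis does the real work, and then to push distributions through both sides using the change-of-variables definition of $\mu_{\catD}$.

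First I would observe that because the arrows in $\catC$ and $\catD$ are taken to be exactly the relations induced by the projections $\pi^{\catC}$ and $\pi^{\catD}$, the projections themselves are distinguished arrows in each category. A functor must carry arrows to arrows, and since (per the paragraph preceding the lemma) there is at most one arrow between any two objects, $\gamma$ has no choice but to send the projection arrow out of an object $C$ to the projection arrow out of $\gamma(C)$. This gives the key intertwining relation
\begin{equation*}
\gamma \circ \pi^{\catC} \;=\; \pi^{\catD} \circ \gamma
\end{equation*}
as maps on objects.

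Next, I would compute $\mu_{\catD}(\pi^{\catD})^{-1}$ using the definition $\mu_{\catD} = \mu_{\catC}\gamma^{-1}$. For any measurable set $A$ of objects in $\catD$,
\begin{equation*}
\mu_{\catD}\bigl((\pi^{\catD})^{-1}(A)\bigr)
\;=\; \mu_{\catC}\bigl(\gamma^{-1}(\pi^{\catD})^{-1}(A)\bigr)
\;=\; \mu_{\catC}\bigl((\pi^{\catD}\circ\gamma)^{-1}(A)\bigr)
\;=\; \mu_{\catC}\bigl((\gamma\circ\pi^{\catC})^{-1}(A)\bigr)
\;=\; \mu_{\catC}\bigl((\pi^{\catC})^{-1}\gamma^{-1}(A)\bigr).
\end{equation*}
Then I would invoke the hypothesis $\mu_{\catC} = \mu_{\catC}(\pi^{\catC})^{-1}$ to replace $\mu_{\catC}(\pi^{\catC})^{-1}\gamma^{-1}(A)$ by $\mu_{\catC}\gamma^{-1}(A) = \mu_{\catD}(A)$, yielding $\mu_{\catD}(\pi^{\catD})^{-1} = \mu_{\catD}$, which is the invariance claim.

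The only subtlety, and the step I would most want to state carefully, is the measurability/pullback bookkeeping: one has to know that $\gamma$ is measurable with respect to whatever $\sigma$-algebras carry $\mu_{\catC}$ and $\mu_{\catD}$, and that the chain $(\pi^{\catD}\circ\gamma)^{-1} = \gamma^{-1}(\pi^{\catD})^{-1}$ is valid. In the concrete categories $\cate^{\rest}, \catf^{\rest}, \catg^{\rest}$ used in the paper the objects at each level $n$ are finite, so measurability is automatic and the pullback identities are set-theoretic; this is why I do not expect any genuine obstacle beyond verifying the commutativity $\gamma\circ\pi^{\catC}=\pi^{\catD}\circ\gamma$, which is really the content of the lemma.
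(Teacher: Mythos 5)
Your proposal is correct and follows essentially the same route as the paper's own proof: derive the intertwining relation $\gamma\circ\pi^{\catC}=\pi^{\catD}\circ\gamma$ from functoriality and then push the measure through via $\mu_{\catD}=\mu_{\catC}\gamma^{-1}$. Your remark that uniqueness of arrows forces $\gamma$ to carry projection arrows to projection arrows, and your note on measurability, are slightly more careful than the paper's write-up but do not change the argument.
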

\begin{proof}
Since $\gamma:\catC\rightarrow\catD$ is a functor, we have $\gamma(g\circ f)=\gamma(g)\circ\gamma(f)$ for any arrows $f,g\in\catC$ such that $\dom(g)=\cod(f)$.  Let $c$ be an object in $\catC$ and $c':=\pi^{\catC}(c)$, so that we have an arrow $c\rightarrow_{\pi^{\catC}(c)}c'$ which we denote by $\pi^{\catC}(c)$, and the action of the functor $\gamma$ is such that
$$(\gamma\circ\pi^{\catC})(c)=\gamma(\pi^{\catC}(c))=\pi^{\catD}(\gamma(c'))=(\pi^{\catD}\circ\gamma)(c'),$$
and we have $\gamma\circ\pi^{\catC}=\pi^{\catD}\circ\gamma$ and invariance of the induced measure $\mu_{\catD}$ follows.  Indeed, for $d=\gamma(c)$ and $d'=\gamma(c')$,
$$\mu_{\catD}(d')\equiv\mu_{\catC}\gamma^{-1}(d')=(\mu_{\catC}\pi^{\catC^{-1}})\gamma^{-1}(d')=\mu_{\catC}(\gamma\circ\pi^{\catC})^{-1}(d')=\mu_{\catC}(\pi^{\catD}\circ\gamma)^{-1}(d')=(\mu_{\catC}\gamma^{-1})\pi^{\catD^{-1}}(d')=\mu_{\catD}\pi^{\catD^{-1}}(d').$$
\end{proof}
This will simplify our proofs below for infinite exchangeability of the random graph induced by a random subset of $2^{[n]}$.
\section{Construction of an infinitely exchangeable random graph}\label{section:construction}
Let $X$ be a Poisson point process on $2^{[n]}$ with mean measure $\Lambda$ so that $\{X_a:a\in 2^{[n]}$\} is a collection of independent Poisson random variables with each $X_a$ having mean $\Lambda(a)\geq0$.  By ignoring multiplicities, each realization of this process defines a random subset $X^*:=\{a\subseteq[n]:X_a>0\}\in\cate_n$ which consists of those points $a\in2^{[n]}$ for which $X_a>0$.  The distribution of $X^*$ is given by
$$\mathbb{P}(X^*\subseteq E)=\prod_{a\subset[n]:a\notin E}\exp\{-\Lambda(a)\}.$$

  In general, $X^*$ will not be monotone, but, as discussed above, it will have a least monotone cover given by $\alpha(X^*):=\{2^a:a\in X^*\}\in\catf_n$.

It is straightforward to compute the induced distribution of $\alpha(X^*)$ on $\mathcal{F}_n$ under the partial ordering $\leq_{\catf}$.  That is, for $A\in\mathcal{F}_n$, let $\bar{A}$ denote the complement of $A$ in $2^{[n]}$, then
\begin{eqnarray*}
\mathbb{P}_n(\alpha(X^*)\leq_{\catf} A)&=&\prod_{a\in\bar{A}}\mathbb{P}(X_a=0)\\
&=&\prod_{a\in\bar{A}}\exp\{-\Lambda(a)\}\\
&=&\exp\left\{-\sum_{a\in\bar{A}}\Lambda(a)\right\}.
\end{eqnarray*}
For each $n\geq1$, let ${\bf0}_n^{\cate}$ and ${\bf0}_n^{\catf}$ be the minimal element of $\cate_n$ and $\catf_n$ respectively, i.e.\ ${\bf0}_n^{\cate}={\bf0}_n^{\catf}=\emptyset$, and define $\mu_n([{\bf0}_n^{\cate},A]):=\mathbb{P}_n(X^*\leq_{\cate} A)=\exp\{-\sum_{a\subset[n]:a\notin A}\Lambda_n(a)\}$ to be the probability measure on $\mathcal{E}_n$ and $\nu_n([{\bf0}_n^{\catf},B]):=\mathbb{P}(\alpha(X^*)\leq_{\catf} B)=\exp\left\{-\sum_{b\in\bar{B}}\Lambda_n(b)\right\}$ the probability measure on $\catf_n$ induced by $\mu_n$ through $\alpha$, shown above, for some non-negative mean measure $\Lambda_n(\cdot)$ on $2^{[n]}$.  

For a subset $A\subset\mathbb{N}$, let $\#A$ denote the cardinality of $A$, i.e.\ the number of elements in $A$.  We now show that a necessary and sufficient condition for $\mu_n$, and hence $\nu_n$, to be infinitely exchangeable under action of $(\varphi_{m,n}^{\cate},m\leq n)$ is that for every $n\geq1$, $\Lambda_n(a)=\lambda_n(\#a)$ for some collection of non-negative real numbers $(\lambda_n(r),n\geq1,0\leq r\leq n)$ which satisfy
\begin{equation}\lambda_n(r)=\lambda_{n+1}(r)+\lambda_{n+1}(r+1).\label{eq:consistent lambda}\end{equation}
\begin{thm}\label{thm:inf exch lambda} The collection $(\mu_n,n\geq1)$ of probability measures on $\cate^{\rest}$ and $(\nu_n,n\geq1)$ on $\catf^{\rest}$ are infinitely exchangeable if and only if the collection of mean measures $(\Lambda_n,n\geq1)$ satisfy
\begin{itemize}
	\item[(a)] for every $n\geq1$, $\Lambda_n(a)=\lambda_n(\#a)$ for all $a\in\mathcal{F}_n$ for some collection of measures $(\lambda_n(\cdot),n\geq1)$ on the non-negative real numbers and
	\item[(b)] for every $n\geq1$, $\lambda_n(r)=\lambda_{n+1}(r)+\lambda_{n+1}(r+1)$ for $r=0,1,\ldots,n$.
\end{itemize}
Moreover, if (a) and (b) hold, the measure $(\mu^*_n,n\geq1)$ induced on $\catg^{\rest}$ by $(\mu_n,n\geq1)$ through $\beta\circ\alpha$, i.e.\ $\mu^*_n:=\mu_n\alpha^{-1}\beta^{-1}=\nu_n\beta^{-1}$, is infinitely exchangeable.
\end{thm}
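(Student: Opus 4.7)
The plan is to verify the two defining components of infinite exchangeability---finite exchangeability (permutation invariance) and consistency under the restriction maps---separately for $(\mu_n)$, show that each is equivalent to one of (a) or (b), and then use Lemma \ref{lemma:functor inf exch} to transport the conclusion along $\cate^{\rest}\to\catf^{\rest}\to\catg^{\rest}$ via the functors $\alpha$ and $\beta$.

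First I would observe that $\mu_n$ is the joint law of the independent Bernoulli events $(\{X_a>0\},a\in 2^{[n]})$ with success probabilities $1-e^{-\Lambda_n(a)}$, so $\mu_n$ is determined by $\Lambda_n$ and conversely. Permutation invariance of $\mu_n$ under $\sigma\in\mathcal{S}_n$ (which acts componentwise on subsets of $2^{[n]}$) then reduces to $\Lambda_n(a)=\Lambda_n(\sigma a)$ for every $a\subseteq[n]$; since the $\mathcal{S}_n$-orbits on $2^{[n]}$ are exactly the cardinality classes $\{a:\#a=r\}$, this is precisely condition (a).

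Next I would verify consistency by comparing $\mu_n$ with the pushforward of $\mu_{n+1}$ under $\varphi^{\cate}_{n,n+1}$; the general case $\varphi^{\cate}_{m,n}$ follows by composition. For $a\subseteq[n]$, $a\in\varphi^{\cate}_{n,n+1}(X^*)$ iff $X_a>0$ or $X_{a\cup\{n+1\}}>0$. Because the pairs $\{a,a\cup\{n+1\}\}$ partition $2^{[n+1]}$ as $a$ ranges over $2^{[n]}$, these events are independent across $a$ with failure probability $\exp\{-(\Lambda_{n+1}(a)+\Lambda_{n+1}(a\cup\{n+1\}))\}$. Hence the pushforward has the same form as $\mu_n$ but with intensity $a\mapsto\Lambda_{n+1}(a)+\Lambda_{n+1}(a\cup\{n+1\})$, so consistency is equivalent to $\Lambda_n(a)=\Lambda_{n+1}(a)+\Lambda_{n+1}(a\cup\{n+1\})$; under (a) this reduces to (b). The two reductions combined give both directions of the iff for $(\mu_n)$, and the iff for $(\nu_n)$ reduces to this since exchangeability of $(\mu_n)$ alone already forces (a) and (b).

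For the remaining implications, note that $\nu_n=\mu_n\alpha^{-1}$ and $\mu_n^*=\mu_n(\beta\circ\alpha)^{-1}=\nu_n\beta^{-1}$. Since $\alpha$ and $\beta$ are functors on the respective projective systems (Section \ref{ref:category theory}), Lemma \ref{lemma:functor inf exch} applies and transports infinite exchangeability of $(\mu_n)$ directly to infinite exchangeability of $(\nu_n)$ on $\catf^{\rest}$ and of $(\mu_n^*)$ on $\catg^{\rest}$. The main obstacle is the Poisson restriction identity in the previous paragraph---that restricting the underlying Poisson point process from $2^{[n+1]}$ down to $2^{[n]}$ is again Poisson with additively combined intensity---which follows from independence of $X_a$ and $X_{a\cup\{n+1\}}$ together with the pair-partition of $2^{[n+1]}$; everything else is bookkeeping.
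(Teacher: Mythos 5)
Your proposal is correct and follows essentially the same route as the paper: both arguments hinge on the observation that restriction pairs each $e\subseteq[n]$ with $\{e,e\cup\{n+1\}\}\subseteq 2^{[n+1]}$, reducing consistency to $\Lambda_n(a)=\Lambda_{n+1}(a)+\Lambda_{n+1}(a\cup\{n+1\})$, and both invoke Lemma \ref{lemma:functor inf exch} to push infinite exchangeability through $\alpha$ and $\beta$. If anything, your treatment of the converse (identifying the $\mathcal{S}_n$-orbits on $2^{[n]}$ with cardinality classes and using injectivity of the intensity-to-law map) is more explicit than the paper's rather terse reverse implication, but it is the same argument in substance.
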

\begin{proof}
Suppose (a) and (b) hold.  Let $n\geq1$ and suppose $E\in\mathcal{E}_n$.
%and $F\in\mathcal{F}_n$.  
Then $$\mu_n([{\bf0}^{\cate}_n,E])=\exp\left\{\sum_{e\subseteq[n]:e\notin E}\lambda_n(\#e)\right\}.$$
% and 
%$$\nu_n([{\bf0}^{\catf}_n,F])=\exp\left\{\sum_{f\in\bar{F}}\lambda_n(\#f)\right\}.$$
Clearly, the $\mu_n$ are finitely exchangeable for each $n\geq1$ as the measure depends only on the cardinality of the elements of $\{e\subset[n]:e\notin E\}$, which directly depends on the cardinality of the elements of $E$, which are invariant under permutations.  

As noted in section \ref{section:projective systems}, the pullback $\varphi^{\cate^{-1}}_{m,n}$ under the restriction maps takes intervals to intervals.  Hence, for $E=\{E_1,\ldots,E_k\}\in\cate_m$, the interval $[{\bf0}^{\cate}_m,E]\subseteq\mathcal{E}_m$ maps to the interval $[{\bf0}^{\cate}_n,\sup\varphi^{\cate^{-1}}_{m,n}(E)]\subseteq\cate_n$, where $\sup\varphi^{\cate^{-1}}_{m,n}(E)=E\cup\{m+1,\ldots,n\}$ is the unique maximal element of $\varphi^{\cate^{-1}}_{m,n}(E)$ in $\mathcal{E}_n$.  To simplify notation, write $\sup_{m,n} E:=\sup\varphi^{\cate^{-1}}_{m,n}(E)$ in what follows.

%which maps to the interval $[{\bf0}_m,\varphi^{\cate}_{m,n}(A)]$ in $\mathcal{E}_m$ under restriction.  Here we write ${\bf0}_n$ to denote $\emptyset\in\cate_n$ or the monotone set $\emptyset\in\mathcal{F}_n$ with no generators, the minimal element of $\mathcal{F}_n$, depending on which space we are on.  It follows that the pullback of an interval $[A,B]$ in $\mathcal{E}_m$, where $B:=\{B_1,\ldots,B_k\}$, under $\varphi^{\cate}_{m,n}$ is exactly the interval $[A,B\cup\{m+1,\ldots,n\}]$ where we write $B\cup\{m+1,\ldots,n\}$ to denote $B^*:=\{B_j\cup\{n+1\}:j=1,\ldots,k\}$, the maximal element of $\varphi^{E^{-1}}_{m,n}(B)$.  Similarly, for $A,B\in\mathcal{F}_m$ the interval $[A,B]$ pulls back to the interval $\left[\left\langle G(A)\right\rangle,B^*\right]$ in $\mathcal{F}_n$ under $\varphi^{\catf^{-1}}_{m,n}$ where we write $B^*$ to denote the maximal element of $\varphi^{\catf^{-1}}_{m,n}(B)$.  That is, for $B=\langle B_1,\ldots, B_k\rangle$, $B^*=\langle B_j\cup\{m+1,\ldots,n\}:j=1,\ldots,k\rangle$.
Hence, for consistency under sampling by restriction maps we must have
\begin{equation}\mu_n\left([{\bf0}_n,E]\right)=\mu_{n+1}\left([{\bf0}_{n+1},\sup_{n,n+1} E]\right)\label{eq:consistent mu}\end{equation}
%and
%\begin{equation}\nu_n\left([{\bf0}_n,A]\right)=\nu_{n+1}\left([{\bf0}_{n+1},A^*]\right),\label{eq:consistent mu}\end{equation}
%where we emphasize that $A$ and $A^*$ do not correspond to the same object here.  
Consistency follows from this since we now have
$$\mu_{n+1}([{\bf0}_{n+1},\sup_{m,n} E])=\exp\left\{\sum_{e\subseteq[n+1]:e\notin\sup_{m,n} E}\lambda_{n+1}(\#e)\right\},$$
which reduces \eqref{eq:consistent mu} to 
\begin{equation}\sum_{e\subseteq[n]}\lambda_n(\#e)=\sum_{e'\subseteq[n+1]:e'\notin\sup_{m,n} E}\lambda_{n+1}(\#e')\label{eq:consistent mu 2}\end{equation}
which is a sum over subsets of the power set of $[n]$ and $[n+1]$ respectively.  Under restriction, each $e\in 2^{[n]}$ corresponds to a two element subset of $[n+1]$, namely $\{e,e\cup\{n+1\}\}$.  Hence, for $\{e\subseteq[n]:e\notin E\}:=\{e_1,\ldots,e_k\}$, we have $\{e'\subseteq[n+1]:e'\notin{\sup_{m,n} E}\}=\{e_j,e_j\cup\{n+1\}:j=1,\ldots,k\}$ and \eqref{eq:consistent mu 2} is just
$$\sum_{e\subseteq[n]:e\notin E}\lambda_n(\#e)=\sum_{e\subseteq[n]:e\notin E}[\lambda_{n+1}(\#e)+\lambda_{n+1}(\#e+1)].$$  Infinite exchangeability for $\nu_n$ is endowed by $\mu_n$ through $\alpha$, as discussed in section \ref{section:infinite exchangeability}.

For the reverse implication, note that we start with the condition on the mean measures $\Lambda_n$
$$\sum_{e\subseteq[n]:e\notin E}\Lambda_n(e)=\sum_{e\subseteq[n]:e\notin E}[\Lambda_{n+1}(e)+\Lambda_{n+1}(e\cup\{n+1\})]$$
and exchangeability requires $\Lambda_n(a)=\Lambda_n(b)$ for all $a,b\subset[n]$ with $\#a=\#b$ so we can reduce this to the collection of measures $\lambda_n(\cdot)$ taking values in the non-negative real numbers, as we have done above.  And consistency requires of either $\mu_n$ or $\nu_n$ requires (b) to hold.

The infinite exchangeability of the measures $(\mu^*_n,n\geq1)$ on $\catg^{\rest}$ is a direct corollary of the infinite exchangeability of $\mu_n$ and the action of the functor $\beta\circ\alpha:\cate^{\rest}\rightarrow\catg^{\rest}$.
\end{proof}
\begin{cor}\label{thm:infinite measure}Suppose $(\lambda_n(r),n\geq1,r=0,\ldots,n)$ is a doubly indexed sequence of non-negative real numbers satisfying \eqref{eq:consistent lambda}, then there exists a measure $\mu^*$ on $\mathcal{G}$, the space of graphs with vertex set $\mathbb{N}$ such that
$$\mu_n^*(G)=\mu^*\left(\left\{G^*\in\mathcal{G}:G^*_{|[n]}=G\right\}\right),$$
where $G^*_{|[n]}$ denotes the restriction of $G^*$ to the vertex set $[n]$ and $\mu_n^*$ are the induced measures on $\mathcal{G}_n$ given above.\end{cor}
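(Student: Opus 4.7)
The plan is to invoke Kolmogorov's extension theorem applied to the consistent family $(\mu_n^*, n\geq1)$. First, I would identify $\catg$ with the product space $\{0,1\}^{\binom{\mathbb{N}}{2}}$ via $G^*\mapsto(\mathbf{1}\{\{i,j\}\in G^*\})_{\{i,j\}\subset\mathbb{N}}$; endowed with the product topology this is a compact Polish space, and for each $n$ the restriction map $G^*\mapsto G^*_{|[n]}$ is precisely the coordinate projection onto $\binom{[n]}{2}$.

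The key step is to verify consistency under restriction: for every $m\leq n$ and every $G\in\catg_m$,
\[
\mu_m^*(G)=\mu_n^*\bigl(\varphi_{m,n}^{\catg^{-1}}(G)\bigr).
\]
This is immediate from Theorem~\ref{thm:inf exch lambda}. The hypothesized relation \eqref{eq:consistent lambda} is exactly condition~(b) of that theorem, and condition~(a) is supplied by the implicit choice $\Lambda_n(a):=\lambda_n(\#a)$ needed to define the underlying Poisson point processes. Hence $(\mu_n,n\geq1)$ is infinitely exchangeable on $\cate^{\rest}$, and by the final assertion of the theorem the induced family $(\mu_n^*,n\geq1)=(\mu_n\alpha^{-1}\beta^{-1},n\geq1)$ is infinitely exchangeable on $\catg^{\rest}$. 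Consistency under restriction is precisely the second bullet of the definition of infinite exchangeability given in Section~\ref{section:infinite exchangeability}.

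With a consistent family of finite-dimensional distributions on the projections of the compact product space $\{0,1\}^{\binom{\mathbb{N}}{2}}$, Kolmogorov's extension theorem produces a unique Borel probability measure $\mu^*$ on $\catg$ whose pushforward under restriction to $[n]$ equals $\mu_n^*$ for every $n\geq 1$. Since the cylinders $\{G^*\in\catg:G^*_{|[n]}=G\}$ generate the Borel $\sigma$-algebra on $\catg$, this yields exactly the stated identity. No genuine obstacle arises here: Theorem~\ref{thm:inf exch lambda} has already packaged the consistency condition into the hypothesis, and the compact Hausdorff topology on $\catg$ renders $\sigma$-additivity on cylinder sets automatic, so the extension step is entirely routine. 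If one prefers to bypass Kolmogorov explicitly, the same measure can be built by defining $\mu^*$ on the algebra of cylinder sets via the prescribed formula — well-definedness is the consistency just checked — and applying Carathéodory, with $\sigma$-additivity on cylinders following from compactness.
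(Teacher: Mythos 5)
Your proposal is correct and is essentially the argument the paper intends: the corollary is stated without proof, relying on the remark in Section \ref{section:infinite exchangeability} that an infinitely exchangeable (i.e.\ consistent and permutation-invariant) family of finite-dimensional measures uniquely determines a measure on the infinite space, which is exactly the Kolmogorov-extension step you carry out on $\{0,1\}^{\binom{\mathbb{N}}{2}}$. Your identification of the hypothesis with conditions (a) and (b) of Theorem \ref{thm:inf exch lambda} and the verification of cylinder-set consistency fill in the details the paper leaves implicit.
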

We make note of a correspondence between the solutions to \eqref{eq:consistent lambda} and the classical Hausdorff moment problem.  Connection between the Hausdorff moment problem and de Finetti's theorem have been shown by Diaconis and Freedman \cite{DiaconisFreedmanHausdorff} and are well known throughout the literature.  Some usable choices for $(\lambda_n(r),0\leq r\leq n)$ are
\begin{itemize}
	\item $\lambda_n(r)\propto\alpha^r(1-\alpha)^{n-r}$ for $0<\alpha<1$ and
	\item $\lambda_n(r)\propto {n\choose{r}} ^{-1}$.
\end{itemize}
\section{Cluster analysis}\label{section:clustering}
Given a measure $\mu_n^*$ on $\mathcal{G}_n$ which is based on a collection $(\lambda_n(r), n\geq1,r=0,1,\ldots,n)$ which satisfy \eqref{eq:consistent lambda}, we can easily calculate the marginal distribution that a triple of vertices, e.g.\ $i,j,k$, is transitive, i.e.\ $i\sim j,i\sim k,j\sim k$, given two are adjacent, e.g.\ $i\sim j$ and $i\sim k$, by a standard exchangeability argument.  In particular, for any $n\geq3$ and $i,j,k\in[n]$ we have 
\begin{eqnarray*}
\mu_n^*[\{i\sim j,i\sim k,j\sim k\}\vert\{i\sim j,i\sim k\}]&=&\mu^*_3[\{1\sim2,1\sim3,2\sim3\}\vert\{1\sim2,1\sim3\}]\\
%&=&\frac{\mu_3\left[\left\{\langle \{1,2\},\{1,3\},\{2,3\}\rangle\right\}\cup\left\{\langle \{1,2,3\}\right\}\rangle\right\}\right]}{\mu_3\left[\{\langle \{1,2\},\{1,3\},\{2,3\}\rangle\right\}\cup\left\{\langle \{1,2,3\}\rangle\right\}\cup\left\{\langle\{1,2\},\{1,3\}\rangle\right\}\right]}\\
&=&\frac{1-e^{-\lambda_3(3)}(1-(1-e^{-\lambda_3(2)})^3)}{1-e^{-\lambda_3(3)}(1-(1-e^{-\lambda_3(2)})^2)},
\end{eqnarray*}
which, unlike the Erd\"os-R\'enyi process, does not correspond to the clustering coefficient.  This calculation only accounts for the subgraph comprised of the vertices $i,j,k$ and any edges between them.  In general, if we observe a graph $G\in\catg_n$ and we are told that $i\sim j$ and $i\sim k$ but not told if there is an edge between $j$ and $k$ or not, but we also know the rest of the graph structure, this information is relevant for determining whether or not $j\sim k$ in $G$, and has implications in the inference of missing links in network data sets.

For example, consider the two graphs $G_1$ and $G_2$ below where
\begin{displaymath}
G_1=\bordermatrix{\text{}&1 & 2 & 3 & 4\cr
1 & - &  & 1 & 0\cr
2 &  & - & 1 & 0\cr
3 & 1 & 1 & - & 1\cr
4 & 0 & 0 & 1 & -}\end{displaymath}
and
\begin{displaymath}
G_2 = \bordermatrix{\text{}&1 & 2 & 3 & 4\cr
1 & - &  & 1 & 1\cr
2 &  & - & 1 & 1\cr
3 & 1 & 1 & - & 0\cr
4 & 1 & 1 & 0 & -},
\end{displaymath}
and the presence or absence of an edge between $1$ and $2$ is unknown, but the rest of this network is known.  Given what we do know about $G_1$, there are three possible monotone sets which correspond to $G_1$,
\begin{displaymath}
\begin{array}{ccc}
\langle\{1,2\},\{1,3\},\{2,3\},\{3,4\}\rangle,&\langle\{1,2,3\},\{3,4\}\rangle,&\langle\{1,3\},\{2,3\},\{3,4\}\rangle.
\end{array}
\end{displaymath}
On the other hand, there are five monotone sets corresponding to $G_2$,
\begin{displaymath}
\begin{array} {ccc}
\langle\{1,2,4\},\{1,2,3\}\rangle,&
\langle\{1,2,3\},\{1,3\},\{2,3\}\rangle&
\langle\{1,2,3\},\{1,4\},\{2,4\}\rangle\\
\langle\{1,2\},\{1,4\},\{2,4\},\{2,3\},\{1,3\}\rangle,&\langle\{1,3\},\{1,4\},\{2,3\},\{2,4\}\rangle.&
\end{array}
\end{displaymath}
 So the information given by the rest of the network, and any edges, or lack thereof, which involve any of the $i,j,k$ of interest, affects the conditional probability of $j\sim k$, e.g.\ $1\sim 2$ in this case.

  In general, the clustering of this process is expected to be larger than the marginal probability expression above as the presence of a cluster of three vertices increases the probability of other clusters which involve these vertices.  The nature of the construction, e.g.\ description of the functor $\beta\circ\alpha$, leads to overlapping of the various subsets of $A$ which is ``forgotten'' in the projection onto $\mathcal{G}_n$ and provides various ways by which clustering can occur.
  
  The construction also allows us to move between the space of graphs and that of monotone sets, which is helpful in calculations.

\subsection{Detecting clusters}\label{section:cluster analysis}
The nature of this construction naturally lends itself to methods in cluster analysis, which has been studied in certain applications in statistics and machine learning \cite{BleiJordanNg2004,McCullaghYang2006, McCullaghYang2008}.  The setting is as follows.  Let $n\geq1$ be the size of a sample for which we label statistical units, e.g.\ individuals, arbitrarily in $[n]$ and observe a network for this sample, i.e.\ an undirected graph $G\in\mathcal{G}_n$.  Along with $[n]$, let $\sim_1,\ldots,\sim_k$ be a collection of different equivalence relations on $[n]$.  A collection of labels $\{i_1,\ldots,i_m\}$ is said to form a {\em cluster}, or community, in our network if, for some $l=1,\ldots,k$, $i_p\sim_l i_q$ for every $p,q=1,\ldots,m$.  Inferring clusters in networks has implications, for example, in the problem of data deduplication and parsing for semi-structured text data sets as well as inferring communities and missing links in social networks.

In a statistical setting for social networks could represent different `types' of relationships among individuals.  That is, individuals in a social network are associated by certain relationships which underlie the network, e.g.\ a 4-node clique in a network could arise from 4 nodes belonging to the same cluster, or due to the overlap of two 3-node clusters and a 2-node cluster, e.g.\ $\{1,2,3\}, \{1,2,4\}$ and $\{3,4\}$, which both result in the presence of the clique $\{1,2,3,4\}$ in the projected network.  
\subsubsection{A statistical model}
In the setting of section \ref{section:construction} consider an infinite population $\mathcal{U}$ of units from which we sample a finite number $n\geq1$ which we label in $[n]$, i.e.\ our sample is $u_1,\ldots,u_n$, and we observe for this sample a network, or graph, $G\in\catg_n$ which we assume to have been generated according to the Poisson point process (PPP) construction on $2^{[n]}$ which we laid out above.  In particular, let $\Lambda:=(\Lambda_n(\cdot),n\geq1)$ be a family of mean measures on $(2^{[n]},n\geq1)$ such that $\Lambda_n(a)=\lambda_n(\#a)$ for every $a$ and the $\lambda_n(\cdot)$ satisfy \eqref{eq:consistent lambda}.  Let $\mu_n,\nu_n$ and $\mu^*_n$ be the measures on $\cate_n,\catf_n$ and $\catg_n$ defined in above sections.  Then we have the following model:
\begin{eqnarray*}
X&\sim&\mbox{PPP}(\Lambda)\\
G|\Lambda &\sim&\mu^*.
\end{eqnarray*}

  We now imagine reversing this process to infer whether a given complete subgraph $H\subset G$ of units $\{u_1,\ldots,u_h\}$ represents a cluster of $u_1,\ldots,u_n$.  This amounts to computing the conditional probability that $X_H>0$ given that $H\subset G$.

%In this framework, we imagine a collection of equivalence relations on our set of indices, $\sim_1,\sim_2,\ldots$, each describing a different type of relationship.  The subset $\{a_1,\ldots,a_k\}$ forms a cluster if and only if for some $x=1,2,\ldots$ $a_i\sim_x a_j$ for all $1\leq i,j\leq k$.  In the setting of section \ref{section:construction}, we start with a random collection of clusters, $E_1,\ldots,E_m$, the elements of $\cate_n$ derived from the Poisson point process on $2^{[n]}$, from which we obtain the least monotone cover of this subset through $\alpha$ and then an undirected graph associated to this monotone cover through $\beta$.  Here we imagine reversing this process to perform inference on the clusters based on a graph.
Suppose we observe a network $G\in\mathcal{G}_n$ with complete subgraph $H$.  Under the inverse image of the functor $\beta$, we have that $\beta^{-1}(G)\in\mathcal{F}_n$ is a collection of monotone sets which correspond to $G$.  Furthermore, the inverse image of the least monotone cover $\alpha^{-1}[\beta^{-1}(G)]$ is a collection of possible subsets of $2^{[n]}$ which have least monotone covers corresponding to $\beta^{-1}(G)$.  As we have shown in theorem \ref{thm:inf exch lambda}, the consistency condition in \eqref{eq:consistent lambda} guarantees infinite exchangeability of $(\mu_n)$ as well as $(\mu_n\alpha^{-1})$ and $(\mu_n(\beta\alpha)^{-1})$ on $\catf^{\rest}$ and $\catg^{\rest}$ respectively. 

Hence $(\beta\alpha)^{-1}(G)$ is the collection of random subsets in $\cate_n$ which correspond to $G$.  Define $Z:=Z(H,G):\subseteq (\beta\alpha)^{-1}(G)$ to be the elements of $(\beta\alpha)^{-1}(G)$ which contain the set $H$, and $\bar{Z}$ the complement of $Z$ in $(\beta\alpha)^{-1}(G)$, i.e.\ $\bar{Z}:=(\beta\alpha)^{-1}(G)-Z$.  Then the conditional probability that $H$ is a cluster in the subsample of $\mathcal{U}$ given $G$ is
\begin{equation}\pi(H;G):=\mathbb{P}(H\mbox{ a cluster})=\mu_n(Z)/\mu_n[(\beta\alpha)^{-1}(G)].\label{eq:cluster H}\end{equation}
The conditional probability calculation in \eqref{eq:cluster H} is valid for inference of a cluster $H$ if we are interested in the presence specifically of a cluster with exactly the elements of $H$ and not a cluster $H^*\supset H$ which contains the entire cluster $H$.  In applications, we might not be able to observe clusters at such a fine level.  In particular, if there is a cluster $H^*\supseteq H$ in the population, we may not be able to observe the presence or absence of a sub-cluster $H$ within the larger cluster.  In this case, it is reasonable to consider performing inference at a coarser level.  That is, for a given collection $H:=\{u_1,\ldots,u_h\}$, we wish to determine whether there is a cluster $H^*\supseteq H$ in the population.

Given a graph with clique $H$, let $H^*(H,G):\subseteq\beta^{-1}(G)$ be the monotone subset of $\beta^{-1}(G)$ such that $H\in H^*$.  In particular, 
$$H^*(H,G):=[H,{\bf1}_n]\cap\beta^{-1}(G).$$
Given $G$, the conditional probability of $H^*(H,G)$ is
$$\mathbb{P}(H^*(H,G)|G)=\nu_n\left[[H,{\bf1}_n]\cap \beta^{-1}(G)\right]/\nu_n[\beta^{-1}(G)].$$
\subsubsection{Stochastic classification}\label{section:stochastic classification}
Stochastic classification models have been studied previously by McCullagh and Yang \cite{McCullaghYang2008} in the context of the Gauss-Ewens clustering process.  In that model, a finite sample $u_1,\ldots,u_n$ is taken from an infinite population $\mathcal{U}$ of units for which we observe some feature $Y_i:=Y(u_i)\in\mathcal{S}$, for some subspace $\mathcal{S}$, usually $\mathcal{S}\subseteq\mathbb{R}^d$.  Associated to $\mathcal{U}$ is a partition $B$ of $\mathbb{N}$ and conditional on $B$ the vector $Y:=(Y_1,\ldots,Y_n)$ is normally distributed with mean and covariance which depend on $B$ only through the restriction $B_{|[n]}$ of $B$ to $[n]$.  For a newly sampled individual $u^*$, it is shown how to classify $u^*$ based on its feature $Y^*:=Y(u^*)$ and the data $(Y_1,\ldots,Y_n)$ and $B_{|[n]}$ already obtained.  This is carried out by computing the conditional distribution that $u^*$ belongs to each block $b\in B_{|[n]}$ or a possibly new block of $B$ given the data $Y_1,\ldots,Y_n,Y^*$ and $B_{|[n]}$.  The infinite exchangeability of the Gauss-Ewens process is a tool which allows the computation of conditional distributions in this setting.

In our setting, we also assume an infinite population $\mathcal{U}$ and an associated network (undirected graph) $G$ of $\mathcal{U}$ which is generated by the Poisson point process recipe above.  Suppose we sample $u_1,\ldots,u_n$ from $\mathcal{U}$ and we observe the restriction $G_{|[n]}$ of $G$ to $[n]$ as well as the component of $\cate_n$ which corresponds to $G_{|[n]}$.  That is, we assume we observe the realization of $X^*$, the projection of the Poisson point process on $2^{[n]}$ onto $\cate_n$.  For a new individual $u^*\in\mathcal{U}$, suppose we observe its connections within the network, i.e.\ we have complete knowledge of $G_{|[n+1]}$, but nothing about its associated clusters in $\cate_{n+1}$.

Infinite exchangeability of the measures $(\mu_n,n\geq1)$ on $\cate^{\rest}$ makes the specification of the conditional distribution of various outcomes straightforward.  Given $X^*\in\cate_n$ and $\tilde{G}\in\catg_{n+1}$, let $H^*\in\cate_{n+1}$ be a collection which is consistent with $X^*$ and $\tilde{G}$, i.e.\ $H^*\subseteq(\beta\alpha)^{-1}(\tilde{G})\cap\varphi^{\cate^{-1}}_{n,n+1}(X^*)$, we have  
\begin{eqnarray*}
\pr(H^*|\tilde{G},X^*)&=&\frac{\mu_{n+1}(H^*)}{\mu_{n+1}[(\beta\alpha)^{-1}(\tilde{G})\cap\varphi_{n,n+1}^{\cate^{-1}}(X^*)]}.
\end{eqnarray*}

\end{document}